\documentclass[11pt]{article}

\newcommand\version{May 27, 2026}

\usepackage{amsmath,amssymb,amsthm,mathtools}

\newtheorem{theorem}{Theorem}
\newtheorem{lemma}{Lemma}

\newcommand{\avg}[1]{\left\langle #1\right\rangle}
\newcommand{\Ent}{\operatorname{Ent}}
\newcommand{\E}{\mathcal E}
\newcommand{\ip}[1]{\left\langle #1\right\rangle}

\newcommand{\one}{\mathbf 1}
\newcommand{\Var}{\operatorname{Var}}
\newcommand{\Z}{\mathbb Z}

\title{The sharp log Sobolev inequality on finite cycles}
\author{Rupert L. Frank\footnote{LMU Munich, r.frank@lmu.de} \footnote{Munich Center for Quantum Science and Technology} \footnote{Caltech}
\and Paata Ivanisvili\footnote{University of California, Irvine, pivanisv@uci.edu}}
\date{\version}

\begin{document}
	\maketitle
	
	\begin{abstract}
		We settle the problem of finding the sharp constant in the log Sobolev inequality on the $n$-cycle for all $n\ge 4$, by showing that it is equal to half of the spectral gap. We deduce this result from an optimal cubic Sobolev inequality.
	\end{abstract}

%%%%%%%%%%%%%%%%%%%%%%%

\section{Introduction and main result}

Let
$$
C_n := \Z/n\Z \,,\quad n\ge 2 \,,
$$
the $n$-cycle. We write functions $f:C_n\to\mathbb R$ as $f=(f_i)_{i\in C_n}$ and use the normalized average
\[
\avg{f} :=\frac1n\sum_{i\in C_n} f_i.
\]
The natural Dirichlet form is
\begin{equation}\label{eq:dirichlet}
	\E_n(f,f):=\frac1{2n}\sum_{i\in C_n}(f_i-f_{i+1})^2
	=\frac12\avg{(f_i-f_{i+1})^2}.
\end{equation}
The variance is
$$
\Var(f)=\avg{f^2}-\avg{f}^2
$$
and, for a nonnegative function $g:C_n\to\mathbb R$, the relative entropy is
\[
\Ent(g):=\avg{g\log g}-\avg{g}\log\avg{g}
\]
with the convention $0\log0=0$. The two constants we are interested in are the spectral gap
\begin{equation}\label{eq:lambda_var}
	\lambda_n
	:=\inf_{\Var(f)>0}\frac{\E_n(f,f)}{\Var(f)}
\end{equation}
and the logarithmic Sobolev constant
\begin{equation}\label{eq:alpha}
	\alpha_n
	:=\inf_{f:\,\Ent(f^2)>0}
	\frac{\E_n(f,f)}{\Ent(f^2)} \,.
\end{equation}
In this, as in many other problems, the computation of $\alpha_n$ is much harder than that of $\lambda_n$. Indeed, a standard Fourier computation (as, for instance, in Lemma \ref{lem:highfreq} below) gives
\begin{equation}\label{eq:lambda}
	\lambda_n=1-\cos\frac{2\pi}{n} \,.
\end{equation}
Also, a well-known and rather general argument gives the bound
\begin{equation}
	\label{eq:upperbound}
	\alpha_n\le \frac{\lambda_n}{2}.
\end{equation}
Our main result says that in this inequality one has equality.

\begin{theorem}
	\label{thm:main}
	Let $n\ge 4$. Then
	$$
	\boxed{\alpha_n = \frac{\lambda_n}{2}.}
	$$
\end{theorem}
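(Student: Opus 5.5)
Since \eqref{eq:upperbound} already gives $\alpha_n\le\lambda_n/2$, the task is the reverse inequality $\Ent(f^2)\le \frac{2}{\lambda_n}\E_n(f,f)$ for every $f$ on $C_n$ with $n\ge4$. The abstract points to the route: first prove a sharp \emph{cubic} Sobolev-type inequality, then deduce the log Sobolev inequality from it. The model is the classical passage from hypercontractivity or $L^p$-Sobolev inequalities to log Sobolev by differentiating in $p$. Here, however, the plan is to use a single inequality involving $\avg{f^3}$ together with an elementary pointwise bound for $f^2\log f^2$.

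\textbf{Step 1: reductions.} The quotient is invariant under $f\mapsto cf$, and $\E_n(|f|,|f|)\le\E_n(f,f)$. So I may assume $f\ge0$ and $\avg{f}=1$, and write $f=1+g$ with $\avg{g}=0$ and $g\ge-1$.

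\textbf{Step 2: the cubic inequality.} I would aim to prove an inequality of the form
\[
\avg{(1+g)^3}\le 1+ A\,\E_n(g,g)+B\,\E_n(g,g)^{3/2}\quad\text{or}\quad \avg{g^2}+\tfrac13\avg{g^3}\le \frac{1}{\lambda_n}\E_n(g,g)\,\Phi\bigl(\text{size of }g\bigr),
\]
with constants arranged so that the leading quadratic term is exactly $\Var(f)\le\E_n/\lambda_n$. The key observation is that on $C_n$ the Fourier modes have a simple structure. Lemma~\ref{lem:highfreq} (the high-frequency lemma) controls the part of $g$ orthogonal to the first modes $e^{\pm2\pi i k/n}$ with a strictly better constant $1-\cos(4\pi/n)$. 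The lowest modes $a\cos(2\pi k/n)+b\sin(2\pi k/n)$ have vanishing cube average when $n\ne3$, since $\sum_k \cos^3(2\pi k/n)=0$ unless $3\mid$ the relevant frequencies; this is where $n\ge4$ enters. Hence the cubic term is at least quadratic in the high-frequency part, and the surplus spectral gap from Lemma~\ref{lem:highfreq} absorbs it. Concretely, I decompose $g=g_1+h$, expand $\avg{g^3}=3\avg{g_1^2h}+3\avg{g_1h^2}+\avg{h^3}$ (using $\avg{g_1^3}=0$), and bound each term with $\|g_1\|_\infty\lesssim\|g_1\|_2$ together with the constraint $g\ge-1$, which gives an $L^\infty$ bound on $g$ through $\E_n$ on a finite graph.

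\textbf{Step 3: from cubic to entropy.} I would use the pointwise inequality
\[
(1+x)^2\log(1+x)^2-2x-x^2\le c_1x^2+\tfrac13x^3,\qquad x\ge-1,
\]
or a variant tuned so that the right-hand side is exactly the combination controlled in Step~2. Averaging with $\avg{g}=0$ gives
\[
\Ent(f^2)\le \avg{(1+g)^2\log(1+g)^2}-(1+\avg{g^2})\log(1+\avg{g^2}),
\]
which should be bounded by $2\avg{g^2}+\frac23\avg{g^3}$ up to a correction. Step~2 then yields $\Ent(f^2)\le\frac{2}{\lambda_n}\E_n(f,f)$.

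\textbf{Main obstacle.} The hard part is Step~2 for \emph{large} $g$, when the function is far from constant, for example concentrated near a single vertex. There the perturbative Fourier argument fails and the cubic term cannot be absorbed using only small-$g$ expansions. I would first try a case split. If $\E_n(g,g)$ exceeds a threshold $\delta$, I would compare directly with a crude log Sobolev bound, using the known non-sharp $\alpha_n\ge c\lambda_n$ and checking numerically that the margin suffices. If $\E_n(g,g)<\delta$, the perturbative argument applies. I expect the small cases $n=4,5,6$ to need separate explicit verification.
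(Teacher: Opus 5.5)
Your reduction to $f\ge0$, the split into $V_1$ and high frequencies, and the observation $\avg{v^3}=0$ all match the paper. The genuine gap is in Steps 2--3. It comes from normalizing $\avg{f}=1$ and then using $(1+s)\log(1+s)\ge s$ with $s=\avg{g^2}$. After that step, the cubic inequality your chain needs for the sharp constant is $\avg{g^2}+\frac13\avg{g^3}\le\lambda_n^{-1}\E_n(g,g)$, i.e.\ your $\Phi\equiv1$. (The cubic coefficient in your pointwise bound must be $\frac23$, not $\frac13$; with $c_1=2$ your displayed inequality fails for small $x>0$.)

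Write $g=v+z$ with $v\in V_1$, $z\perp\one,V_1$. Then that inequality is equivalent to $\frac23\avg{(v+z)^3}\le Q(z)$, and it is false for every $n\ge4$, because the cross term $2\avg{v^2z}$ is linear in $z$ while $Q(z)$ is quadratic. Concretely, take $v=\epsilon v_0$, $z=\epsilon^3w$ with $(v_0)_j=\cos\frac{2\pi j}{n}$ and $w_j=\cos\frac{4\pi j}{n}$. The left side is $2\epsilon^5\avg{v_0^2w}+O(\epsilon^7)$ with $\avg{v_0^2w}>0$, while the right side is only $\epsilon^6Q(w)$. Globally, $f=(4,0,0,0)$ on $C_4$ gives $\avg{g^2}+\frac13\avg{g^3}=5>4=\lambda_4^{-1}\E_4(g,g)$. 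So even the perturbative regime, which you treat as the easy part, does not close. The term you discarded, $(1+s)\log(1+s)-s\approx s^2/2\sim\epsilon^4$, is exactly what dominates $\avg{v^2z}$. Your other variant, with an additive $B\,\E_n^{3/2}$ error, leaves a positive remainder of order $\epsilon^3$ and cannot give the sharp constant either.

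The idea you are missing is to normalize $\avg{f^2}=1$ instead. Then $\Ent(x^2)=\avg{2x^2\log x}\le\avg{P_3(x)}=\frac23\avg{(x-1)^2(x+2)}$ with no loss. In the resulting cubic inequality, the mean $a=\avg{x}$ contributes $-(1-a)^2(1+2a)\le-\frac34(1-a^2)^2=-\frac34(r^2+t^2)^2$, where $r=\|v\|_2$ and $t=\|z\|_2$. This quartic coercive term works together with $Q\ge\kappa_nt^2$ and the $n$-uniform bound $\|z\|_\infty^2\le\sigma_nQ$. It absorbs $r^2t$, $rt^2$ and $\avg{z^3}$ for all $x\ge0$ at once, via the discriminant estimates of Lemma~\ref{lem:scalar1}; the cases $n=4,5$ are done by exact Fourier computations. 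There is no small/large dichotomy, and positivity is used only to get $a\ge0$.

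Your proposed fallback for large $g$ would not work in any case. A non-sharp bound $\alpha_n\ge c\lambda_n$ with $c<\frac12$ provides no margin. What you would actually need is that the log Sobolev quotient exceeds $\lambda_n/2$ away from constants, uniformly in $n$, which is as hard as the theorem itself. Numerical verification can only treat finitely many $n$.
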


\noindent
\emph{Remarks.}
(a) Some cases of this theorem were known before: Chen and Sheu \cite{ChenSheu2003} proved the theorem for all \emph{even} $n\geq 4$, Chen, Liu and Saloff-Coste \cite{ChenLiuSaloffCoste2008} proved it for $n=5$ and, more recently, Faust and Fawzi \cite{FaustFawzi2021} proved it for odd $n\in\{7,\ldots,21\}$ using certified numerics. Our proof recovers all these results in a unified way, using a completely different approach.\\
	(b) Interest in the question of determining the sharp value of $\alpha_n$ was sparked by the influential work of Diaconis and Saloff-Coste \cite{DiaconisSaloffCoste1996}. Among other things, they showed that $\alpha_n$ is of the same order as $\lambda_n$. They also computed $\alpha_3$ and concluded that $\alpha_3 < \frac{\lambda_3}{2}$. In this sense our assumption $n\ge 4$ is best possible. For $n=2$ one has $\alpha_2=\frac{\lambda_2}{2}$, as is well-known.\\
	(c) The log Sobolev and the spectral gap inequality arise naturally in connection with the simple random walk on $C_n$. The corresponding Markov kernel $K$ is given by $K(i,j)=1/2$ if $j=i+1$ or $j=i-1$ and $K(i,j)=0$ otherwise, and the uniform distribution on $C_n$ is its unique stationary distribution.\\
	(d) In the limit $n\to\infty$ one obtains the sharp log Sobolev inequality on the (continuous) circle \cite{Rothaus1980},
	$$
	\int_0^1 (f')^2\,d\theta \geq \frac{(2\pi)^2}{2} \left( \int_0^1 f^2 \log f^2\,d\theta - \int_0^1 f^2\,d\theta \, \log \int_0^1 f^2\,d\theta \right),
	$$
    valid for all functions $f\in H^1(\mathbb R /\mathbb Z)$. Indeed, it suffices to apply the inequality on $C_n$ to the restriction of $f$ to $\{i/n:\ i=0,\ldots,n-1\}$. Alternatively, one can note that our proof extends to the continuum and thus gives another proof of this inequality.\\
	(e) The topic of log Sobolev inequalities has a long history, which we do not attempt to survey, except for mentioning the ground breaking paper \cite{Gross1975} and the review \cite{Gross2006}. For the discrete setting, we refer to the already mentioned works \cite{DiaconisSaloffCoste1996,ChenLiuSaloffCoste2008}.\\
    (f) Using the usual tensorization argument for log Sobolev inequalities \cite{Gross2006}, we obtain the optimal log Sobolev constant for
    $$
    C_{n_1}\times \cdots \times C_{n_L}
    $$
    with Dirichlet form
    $$
    \mathcal E(f,f) = \sum_{\ell=1}^L c_\ell \, \mathcal E^{(n_\ell)}(f,f) \,,
    $$
    where $n_\ell\neq 3$ for all $\ell\in\{1,\ldots,L\}$, where $c_\ell>0$ are arbitrary coefficients and where $\mathcal E^{(n_\ell)}$ only acts on the $\ell$-th factor. The sharp constant is
    $$
    \min_{1\leq\ell\leq L} c_\ell \, \frac{\lambda_{n_\ell}}{2} \,.
    $$
    \\
    (g) Again by standard facts about log Sobolev inequalities \cite{Gross2006} we obtain the hypercontractivity bound
    $$
    \| P_t f \|_q \leq \|f\|_p
    \qquad\text{for}\ e^{-2\lambda_n t} \leq \frac{p-1}{q-1}
    $$
    for $n\geq 4$, where $P_t$ is the simple random walk semigroup on $C_n$. A corresponding result is valid for products of cycles as in (f).

%%%%%%%%%%%%%%%%%%%%%%%

\subsection*{The cubic Sobolev inequality and the reduction of Theorem \ref{thm:main}}

The key ingredient in our proof is the following sharp cubic Sobolev inequality, where we set
$$
D(x):=\ip{(x_j-x_{j+1})^2} = 2\, \E_n(x,x) \,.
$$

\begin{theorem}[Cubic Sobolev inequality]
	\label{thm:cubic}
	Let \(n\ge4\). If \(x_j\ge0\) for all \(j\in C_n\) and
	\[
	\ip{x^2}=1,
	\]
	then
	\[
	\boxed{
        D(x) \geq \frac{2\lambda_n}{3}
		\ip{(x-1)^2(x+2)} \,.
	}
	\]
\end{theorem}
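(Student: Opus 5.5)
We want, for $x\ge0$ with $\avg{x^2}=1$, the bound $D(x)\ge \frac{2\lambda_n}{3}\avg{(x-1)^2(x+2)}$. First expand the right side. Since $(x-1)^2(x+2)=x^3-3x+2$ and $\avg{x^2}=1$, it equals
$$
\avg{x^3}-3\avg{x}+2 .
$$
Write $x=a+y$ with $a=\avg{x}$ and $\avg{y}=0$, so that $a^2+\avg{y^2}=1$. A direct computation gives
$$
\avg{x^3}-3a+2=\avg{y^3}+3a\avg{y^2}+a^3-3a+2 .
$$
Substituting $\avg{y^2}=1-a^2$, this becomes $\avg{y^3}+2(1-a)^2(1+a)-\dots$. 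I would simplify it cleanly to
$$
\avg{y^3}+3a(1-a^2)+a^3-3a+2=\avg{y^3}+2(1-a^3).
$$
Also $1-a^3=(1-a)(1+a+a^2)$, and $1-a=\frac{\avg{y^2}}{1+a}$. Hence the right side is
$$
\avg{y^3}+\frac{2(1+a+a^2)}{1+a}\avg{y^2}.
$$
The left side is $D(y)$. The inequality to prove therefore reads
$$
D(y)\ \ge\ \frac{2\lambda_n}{3}\Bigl(\avg{y^3}+c(a)\avg{y^2}\Bigr),\qquad c(a)=\frac{2(1+a+a^2)}{1+a}\in\Bigl[\tfrac32,2\Bigr]\ \text{for } a\in[0,1],
$$
under the constraints $y\ge -a$ and $a^2+\avg{y^2}=1$.

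For the quadratic part I would use the spectral gap, $D(y)\ge 2\lambda_n\avg{y^2}$. This leaves a margin of $2\lambda_n\bigl(1-\tfrac{c(a)}{3}\bigr)\avg{y^2}$ with which to absorb $\frac{2\lambda_n}{3}\avg{y^3}$. The margin vanishes only at $a=1$, which is the regime of small perturbations. Near $a=1$ (that is, $y$ small), the cubic term is lower order, except that the margin there is itself only of order $(1-a)\sim\avg{y^2}$.

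This forces a refined decomposition. Split $y=y_1+y_h$, where $y_1$ is the projection onto the lowest Fourier modes $e^{\pm 2\pi i j/n}$ and $y_h$ is the higher-frequency part. The paper's Lemma \ref{lem:highfreq} should give the improved gap on $y_h$, namely $D(y_h)\ge 2(1-\cos\frac{4\pi}{n})\avg{y_h^2}$. For the pure first mode $y_1=r\cos(2\pi j/n+\phi)$, the sum $\sum_j y_1^3$ vanishes when $3\nmid n$. Even when $3\mid n$, the third-power sum is an explicit trigonometric sum that one controls. The cross terms $\avg{y_1^2y_h}$ are then absorbed using the gap surplus $D(y_h)-2\lambda_n\avg{y_h^2}$. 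This is the usual second-order mechanism, and it is where $n\ge4$ should enter; for $n=3$ there is no higher-frequency gap to exploit.

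For large deviations (small $a$), the positivity constraint $y\ge -a$ controls the cubic term. Indeed $\avg{y^3}$ can only be large and positive when $x$ has a large value, but $\avg{x^2}=1$ bounds $\max x\le\sqrt n$. I would combine this with the discrete Sobolev-type bound
$$
\max_j x_j-\min_j x_j\le \sqrt{\tfrac n4\,nD(x)},
$$
which follows from summing differences along the two arcs of the cycle. This gives $\avg{y^3}\le \|y\|_\infty\avg{y^2}$, and hence the inequality whenever $D$ is large compared to the margin.

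The main obstacle is the intermediate regime, where neither the perturbative nor the crude argument closes with the sharp constant uniformly in $n$. There I would try to find a pointwise or "local" inequality, summed over edges, of the form
$$
(x_j-x_{j+1})^2\ \ge\ \text{a combination of } \phi(x_j)-\phi(x_{j+1}) \text{ terms}
$$
plus a quadratic form that is diagonalized by Fourier. That is, I would look for a sum-of-squares certificate for the cubic polynomial inequality on the cone $x\ge0$, exploiting cyclic symmetry. I expect constructing this certificate for all $n\ge4$ to be the hardest step.
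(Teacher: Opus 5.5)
Your reduction is correct and equivalent to the paper's. Write $y=v+z$ with $v\in V_1$, $z\perp\one,V_1$, $r=\|v\|_2$, $t=\|z\|_2$. Then the margin $2\lambda_n(1-\frac{c(a)}3)\ip{y^2}$ equals exactly $\frac{2\lambda_n}3(1-a)^2(1+2a)$. The claim becomes $Q(z)\ge\frac23\bigl(\ip{(v+z)^3}-(1-a)^2(1+2a)\bigr)$ with $Q(z)=D(z)/\lambda_n-2\|z\|_2^2$. There are two slips along the way:
\begin{itemize}
\item $c(a)=2\bigl(a+\frac1{1+a}\bigr)$ increases from $2$ to $3$ on $[0,1]$; it does not lie in $[\frac32,2]$. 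Only $c(1)=3$ is consistent with your remark that the margin vanishes at $a=1$.
\item $\ip{v^3}=0$ for \emph{every} $n\ge4$, because $e^{\pm6\pi ij/n}$ averages to zero unless $n\mid3$. The condition is $n\nmid3$, not $3\nmid n$.
\end{itemize}

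The genuine gap is everything after the reduction. You treat the $v+z$ mechanism as only perturbative near $a=1$. You sketch a separate crude argument for small $a$, and leave the intermediate regime to a sum-of-squares certificate you do not construct. As written, no regime is actually proved. The mechanism you describe is also incomplete near $a=1$. The surplus $Q(z)\ge\kappa_nt^2$ cannot by itself absorb $\ip{v^2z}$, which can be of order $r^2t$ because $v^2-\ip{v^2}$ itself lies in the high-frequency space. It must be paired with an $r^4$ contribution from the margin. In the far regime, your bound $\ip{y^3}\le\|y\|_\infty\ip{y^2}$, with $\|y\|_\infty$ controlled by all of $D(x)$, discards the cancellation $\ip{v^3}=0$. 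It also fails to separate the $V_1$-energy from the surplus.

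The missing idea is that the decomposition closes globally, with no regimes, once each term is estimated non-perturbatively.
\begin{enumerate}
\item The margin dominates a quartic uniformly in $a\in[0,1]$: $(1-a)^2(1+2a)-\frac34(1-a^2)^2=\frac14(1-a)^3(1+3a)\ge0$. Hence $(1-a)^2(1+2a)\ge\frac34(r^2+t^2)^2$. Positivity of $x$ is used only to guarantee $a\ge0$.
\item For $n\ge6$ the cubic terms obey exact bounds: $\ip{v^3}=0$, $|\ip{v^2z}|=|\ip{(v^2-\ip{v^2})z}|\le\frac{r^2t}{\sqrt2}$, $|\ip{vz^2}|\le\|v\|_\infty t^2\le\sqrt2\,rt^2$, and $|\ip{z^3}|\le\|z\|_\infty t^2\le\sqrt{\sigma_nQ(z)}\,t^2$.
\item The last of these uses the $n$-uniform estimate $\|z\|_\infty^2\le\sigma_nQ(z)$, with $\sigma_n=\frac34-\frac14\tan^2\frac\pi n<\frac34$. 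It follows from Cauchy--Schwarz on the Fourier coefficients and a telescoping cotangent sum. This is $L^\infty$ control of the high-frequency part by the surplus alone, which your oscillation bound lacks.
\item The elementary inequality $\frac3{\sqrt2}r^2t+3\sqrt2\,rt^2\le\frac34(r^2+t^2)^2+4t^2$ pairs the $r^4$ from the margin with the $t^2$ from the surplus. It is a quadratic in $t$ with negative discriminant after setting $s=r/t$. It reduces everything to $Q\ge\frac83t^2+\frac23\sqrt{\sigma_nQ}\,t^2$.
\item That last inequality follows from $Q\ge\kappa_nt^2$ with $\kappa_n=8\cos^2\frac\pi n-2\ge4$, together with $\sigma_n<\frac34$ and $t\le1$.
\end{enumerate}
For $n=4,5$ one computes $\ip{(v+z)^3}$ explicitly in Fourier coefficients and uses analogous scalar inequalities. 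The certificate you expect to be the hardest step is therefore unnecessary. What your plan is missing is the quartic lower bound on the margin and the estimate $\|z\|_\infty^2\le\sigma_nQ(z)$.
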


\noindent
\emph{Remarks.} 
(a) Note that if $x=\one$ (that is, $x_j=1$ for all $j$), then both sides of the cubic Sobolev inequality vanish. Our proof will show that there is a two-dimensional subspace $V_1$, orthogonal to constants, such that for $v\in V_1$ and $x=(\one+\epsilon v)/\sqrt{1+\epsilon^2 \ip{v^2}}$, the inequality is saturated to order $\epsilon^2$ as $\epsilon\to 0$. This is a common feature of our new cubic Sobolev inequality and the original log Sobolev inequality.\\
(b) Similarly to Theorem \ref{thm:main} one can pass to the limit $n\to\infty$ and obtains the inequality
$$
\int_0^1 (f')^2\,d\theta \geq \frac{(2\pi)^2}{3} \int_0^1 (f-1)^2(f+2) \,d\theta \,,
$$
valid for all $0\leq f\in H^1(\mathbb R/\mathbb Z)$ with $\int_0^1 f^2\,d\theta =1$. We have not seen this inequality before in the literature.\\
(c) As we will explain below, our proof of Theorem \ref{thm:cubic} shows some similarities with the proof of degenerate stability of some Sobolev-type inequalities in \cite{Fra2022}. Probably, with some additional work one could get a quantitative version of Theorem \ref{thm:cubic} similarly as in \cite{Fra2022,BriDolSim2023}; see also \cite{DolEstFigFraLos2025} for an optimal stability result for a log Sobolev inequality.

\medskip

The main part of this paper will be concerned with the proof of Theorem~\ref{thm:cubic}. Before diving into this, however, we give the quick argument that allows to deduce Theorem \ref{thm:main} from Theorem \ref{thm:cubic}.

We show that the logarithmic integrand is bounded above by the cubic polynomial
$$
P_3(t):=2(t-1)+3(t-1)^2+\frac23(t-1)^3.
$$

\begin{lemma}[Cubic majorant]\label{lem:cubic-majorant}
	For every $t>0$,
	\begin{equation}\label{eq:cubic-majorant}
		2t^2\log t \le P_3(t) \,.		
	\end{equation}
\end{lemma}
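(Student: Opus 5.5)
Set $h(t):=P_3(t)-2t^2\log t$ for $t>0$; we need $h\ge 0$. First expand $P_3$ in powers of $t$:
$$
P_3(t)=\tfrac23 t^3+t^2-2t+\tfrac13 ,
$$
so that $P_3(1)=0$. A Taylor expansion at $t=1$ shows that $h$ vanishes there to third order: $h(1)=h'(1)=h''(1)=h'''(1)=0$. Indeed, the derivatives of $2t^2\log t$ at $1$ are $0,2,6,4$, and $P_3$ was built to match these. This explains the choice of $P_3$, since it is the third-order Taylor polynomial of $2t^2\log t$ at $1$. The inequality should therefore follow from a sign condition on the fourth derivative, which we check next.

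Compute directly:
$$
\frac{d^3}{dt^3}\bigl(2t^2\log t\bigr)=\frac{4}{t},\qquad \frac{d^4}{dt^4}\bigl(2t^2\log t\bigr)=-\frac{4}{t^2}<0 .
$$
Hence $h''''(t)=4/t^2>0$ on $(0,\infty)$. Taylor's theorem with Lagrange remainder around $t=1$ then gives, for every $t>0$,
$$
h(t)=\frac{h''''(\xi)}{24}(t-1)^4=\frac{(t-1)^4}{6\xi^2}\ge 0
$$
for some $\xi$ between $1$ and $t$. The interval between $1$ and $t$ lies inside $(0,\infty)$, so $h$ is smooth there and the remainder formula applies. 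Equivalently, the integral form of the remainder gives $h(t)=\int_1^t \frac{(t-s)^3}{6}\cdot\frac{4}{s^2}\,ds$. For $t<1$ both the factor $(t-s)^3$ and the orientation of the integral are negative, so the integral is again nonnegative.

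The only points requiring care are the endpoint behaviour and the derivative bookkeeping. As $t\to0^+$ we have $h(t)\to \tfrac13>0$, which is consistent with the convention $0\log 0=0$ and shows nothing degenerates there. The main risk of error lies in verifying $h''(1)=0$ and $h'''(1)=0$, which is routine but must be done carefully. For reference, $P_3'(1)=2$, $P_3''(1)=6$ and $P_3'''(1)=4$, and these agree with the derivatives of $2t^2\log t$ at $1$.
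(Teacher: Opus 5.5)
Your proof is correct and is essentially the paper's argument. The paper likewise notes that $H=P_3-2t^2\log t$ vanishes to third order at $t=1$ with $H^{(4)}(t)=4/t^2>0$, and concludes from the integral remainder $H(t)=\frac23\int_1^t (t-s)^3 s^{-2}\,ds\ge 0$, treating $t>1$ and $t<1$ separately exactly as you do.
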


\begin{proof}
	For $t>0$, set
	\[
	H(t) :=P_3(t)-2t^2\log t.
	\]
	Then $H(1)=H'(1)=H''(1)=H'''(1)=0$, and
	\[
	H^{(4)}(t)=\frac4{t^2}>0.
	\]
	Thus, for every $t>0$
	$$
	H(t)=\frac16\int_1^t H^{(4)}(s)(t-s)^3\,ds
	=\frac23\int_1^t\frac{(t-s)^3}{s^2}\,ds\ge0.
	$$
	Here, for the final inequality, we distinguish the cases $t>1$ and $t<1$.
\end{proof}

The cubic polynomial has the useful identity
\begin{equation}\label{eq:P3-identity}
	P_3(t)=\frac23(t-1)^2(t+2)+(t^2-1).
\end{equation}

\begin{proof}[Proof of Theorem \ref{thm:main}]
	In the infimum \eqref{eq:alpha} defining $\alpha_n$ it suffices to consider $f\ge0$, because replacing $f$ by $|f|$ leaves $\Ent(f^2)$ unchanged and does not increase $\E_n(f,f)$.  By homogeneity, normalize $\avg{f^2}=1$ and write $x=f$.  Lemma \ref{lem:cubic-majorant} gives
	\[
	\Ent(x^2)=\avg{2x^2\log x}
	\le \avg{P_3(x)}.
	\]
	Using \eqref{eq:P3-identity} and $\avg{x^2}=1$,
	\[
	\avg{P_3(x)}=\frac23\avg{(x-1)^2(x+2)}.
	\]
	Combining these two equations with Theorem \ref{thm:cubic}, we obtain
	\[
	\Ent(x^2)
	\le
	\frac{1}{\lambda_n}\, D(x)	=
	\frac{2}{\lambda_n}\E_n(x,x).
	\]
	Hence $\alpha_n\ge \lambda_n/2$. The reverse bound comes from \eqref{eq:upperbound}.	
\end{proof}

%%%%%%%%%%%%%%%%%%%%%%%%%%%%%%%%%%%%%%%%%%

\subsection*{Outline of the proof of Theorem \ref{thm:cubic}}

As we have mentioned after Theorem \ref{thm:cubic}, our cubic Sobolev inequality is saturated for functions of the form $x=(\one+\epsilon v)/\sqrt{1+\epsilon^2 \ip{v^2}}$ where $v$ belongs to a certain two-dimensional subspace $V_1$. It is therefore natural to decompose an arbitrary function $x$ as
$$
x = \ip{x} + v + w \,,
$$
where $v\in V_1$ and where $w$ is orthogonal both to constants and to $V_1$. The important observation is that the spectral gap for $w$ is strictly greater than the spectral gap for arbitrary functions, and it is this coercivity that allows us to prove the bound.

Of course, in the nonlinearity $(x-1)^2(x+2)$ there will be cross terms between the contributions of $\ip{x}$, $v$ and $w$ and a lot of our work goes into controlling these `interactions'. It is at this point that the algebraic nonlinearity $(x-1)^2(x+2)$ is much more convenient than the original nonlinearity $x^2\log x$. 

We expect that the idea of replacing $x^2\log x$ by the simpler nonlinearity $(x-1)^2(x+2)$ might be useful in other questions related to log Sobolev inequalities as well. Note that this idea also appears in \cite{FaustFawzi2021} for a different purpose, which forces them to use a more complicated fifth order polynomial.

%%%%%%%%%%%%%%%%%%%%%%%%%%%%%%%%%
%%%%%%%%%%%%%%%%%%%%%%%%%%%%%%%%%

	\section{Proof of the cubic Sobolev inequality}

	\subsection*{Scalar inequalities}
	
	\begin{lemma}
		\label{lem:scalar1}
		Let \(a,r,t\ge0\) satisfy
		\[
		a^2+r^2+t^2=1.
		\]
		Then
		\begin{equation}
			\label{eq:scalar1}
					\frac{3}{\sqrt2}r^2t+3\sqrt2\,rt^2
			\le
			(1-a)^2(1+2a)+4t^2 \,,
		\end{equation}
		\begin{equation}
			\label{eq:scalar2}
		\frac{3}{\sqrt2}(r^2t+rt^2)
		\le
		(1-a)^2(1+2a)+\frac52t^2
		\end{equation}
        and
        \begin{equation}
			\label{eq:scalar3}
		3 r^2t
		\le
		(1-a)^2(1+2a)+3t^2 \,.
		\end{equation}
	\end{lemma}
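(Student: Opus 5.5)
The plan is to reduce all three inequalities to one lower bound on the cubic term in $a$, followed by a one-variable AM--GM step. First, expand $(1-a)^2(1+2a)=1-3a^2+2a^3$. Then set $s^2:=r^2+t^2=1-a^2\in[0,1]$, so that $a=(1-s^2)^{1/2}$. In terms of $u=s^2$ this gives
\[
(1-a)^2(1+2a)=3u-2+2(1-u)^{3/2}.
\]
I will expand $(1-u)^{3/2}$ in its binomial series on $[0,1]$:
\[
(1-u)^{3/2}=1-\tfrac32u+\tfrac38u^2+\sum_{k\ge3}\binom{3/2}{k}(-u)^k .
\]
For every $k\ge3$ the coefficient $\binom{3/2}{k}(-1)^k$ is nonnegative: the product $\frac32\cdot\frac12\cdot(-\frac12)\cdots(\frac32-k+1)$ contains exactly $k-2$ negative factors, so its sign is $(-1)^{k-2}=(-1)^k$. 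The series converges at $u=1$, so the expansion is valid on all of $[0,1]$. Hence
\[
(1-a)^2(1+2a)\ \ge\ \tfrac34 u^2=\tfrac34 (r^2+t^2)^2 .
\]
This bound is sharp to leading order as $a\to1$, which is exactly the regime where the inequalities are tight.

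It then suffices to prove, for all $r,t\ge0$ (the constraint $s\le 1$ is no longer needed), inequalities of the form $L(r,t)\le \frac34 (r^2+t^2)^2 + c\,t^2$. Here $L$ is the cubic left-hand side and $c\in\{4,\tfrac52,3\}$. For \eqref{eq:scalar3} this is immediate by AM--GM:
\[
\tfrac34 r^4+3t^2\ \ge\ 2\sqrt{\tfrac94 r^4t^2}=3r^2t,
\]
combined with $(r^2+t^2)^2\ge r^4$.

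For \eqref{eq:scalar1} and \eqref{eq:scalar2} I will use polar coordinates $r=s\cos\theta$, $t=s\sin\theta$ with $\theta\in[0,\pi/2]$, and write $L=s^3\sin\theta\,B(\theta)$. Dividing by $s^2$, the needed inequality becomes
\[
s\sin\theta\, B(\theta)\le \tfrac34 s^2+c\sin^2\theta \qquad\text{for all } s\ge0 .
\]
By AM--GM, $\frac34 s^2+c\sin^2\theta\ge \sqrt{3c}\,s\sin\theta$, so it is enough to show $B(\theta)\le\sqrt{3c}$.

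For \eqref{eq:scalar1} we have $B=\frac{3}{\sqrt2}(\cos^2\theta+2\cos\theta\sin\theta)$. Writing $\cos^2\theta+\sin2\theta=\frac12+\frac12\cos2\theta+\sin2\theta$, its maximum is $\frac12+\frac{\sqrt5}{2}$. So $B\le \frac{3}{\sqrt2}\cdot\frac{1+\sqrt5}{2}\approx3.43$, while $\sqrt{12}\approx3.46$.

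For \eqref{eq:scalar2} we have $B=\frac3{\sqrt2}(\cos^2\theta+\cos\theta\sin\theta)$, whose maximum is $\frac3{\sqrt2}\cdot\frac{1+\sqrt2}{2}\approx2.56$, while $\sqrt{15/2}\approx2.74$.

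The only delicate point is the lower bound $(1-a)^2(1+2a)\ge\frac34(1-a^2)^2$. Its proof rests on the sign pattern of the binomial coefficients, so I would double-check it directly: set $h(u)=3u-2+2(1-u)^{3/2}-\frac34u^2$ and note $h(0)=h'(0)=0$ and $h''(u)=\frac32\bigl((1-u)^{-1/2}-1\bigr)\ge0$. Everything after that is elementary trigonometric maximisation. The margin in \eqref{eq:scalar1} is thin, with $(1+\sqrt5)^2\cdot\frac98<12$, i.e. $11.78<12$, so I would verify that number exactly rather than numerically.
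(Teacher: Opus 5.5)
Your proof is correct and follows the paper's route. You reduce via $(1-a)^2(1+2a)\ge\frac34(r^2+t^2)^2$, and your AM--GM step combined with maximisation over $\theta$ is exactly the paper's negative-discriminant check under $s=r/t=\cot\theta$; the paper's bounds $s(s+2)\le\varphi(s^2+1)$ and $s(s+1)\le\frac{1+\sqrt2}{2}(s^2+1)$ are your trigonometric maxima. The only difference is that the paper gets the key lower bound from the identity $(1-a)^2(1+2a)-\frac34(1-a^2)^2=\frac14(1-a)^3(1+3a)$, which is quicker than your series/convexity argument.
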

	
	\begin{proof}
        We begin by noting that
        $$
        (1-a)^2(1+2a) - \frac34(1-a^2)^2 = \frac14 (1-a)^3(1+3a) \ge 0 \,,
        $$
        so
        $$
        (1-a)^2(1+2a) \ge \frac34(1-a^2)^{2} = \frac34(r^2+t^2)^{2} \,.
        $$
		Thus it is enough to prove
		\[
		\frac{3}{\sqrt2}r^2t+3\sqrt2\,rt^2
		\le
		\frac34(r^2+t^2)^2+4t^2 \,,
		%\tag{3}
		\]
		\[
		\frac{3}{\sqrt2}(r^2t+rt^2)
		\le
		\frac34(r^2+t^2)^2+\frac52t^2
		%\tag{5}
		\]
        and
        \[
        3r^2t \le \frac34(r^2+t^2)^2+3t^2 \,.
        \]        
		To do so, we may assume \(t>0\) and put
		\[
		s=\frac rt \,.
		\]
		After dividing by \(t^2\), the claimed inequalities become
		\begin{equation}
			\label{eq:goal1}
		\frac34(s^2+1)^2t^2
		-\frac3{\sqrt2}s(s+2)t
		+4 \geq 0 \,,
		\end{equation}
		\begin{equation}
			\label{eq:goal2}
					\frac34(s^2+1)^2t^2
			-
			\frac3{\sqrt2}s(s+1)t
			+
			\frac52
			\ge0
		\end{equation}
        and
        \begin{equation}
            \label{eq:goal3}
            \frac34(s^2+1)^2t^2 - 3s^2 t+3 \ge 0 \,.
        \end{equation}
        
		All three inequalities are quadratic in \(t\), so it suffices to show that the discriminant $\Delta$ is negative. We now treat the three cases separately.
		
		The discriminant in the first case is
		\[
		\Delta
		=
		\frac92s^2(s+2)^2
		-
		12(s^2+1)^2.
		\]
		Let \(\varphi=(1+\sqrt5)/2\). Since
		\[
		\varphi(s^2+1)-s(s+2)
		=
		\frac1\varphi(s-\varphi)^2\ge0,
		\]
		we have
		\[
		s(s+2)\le \varphi(s^2+1).
		\]
		Because \(3\varphi^2<8\),
		\[
		3s^2(s+2)^2
		\le
		3\varphi^2(s^2+1)^2
		<
		8(s^2+1)^2.
		\]
		Thus \(\Delta<0\), so the quadratic is nonnegative for all \(t\), proving \eqref{eq:goal1}.
		
		The discriminant in the second case is
		\[
		\Delta
		=
		\frac92s^2(s+1)^2
		-
		\frac{15}{2}(s^2+1)^2.
		\]
		Let \(\sigma=1+\sqrt2\). Since
		\[
		\frac{\sigma}{2}(s^2+1)-s(s+1)
		=
		\frac1{2\sigma}(s-\sigma)^2\ge0,
		\]
		we have
		\[
		s(s+1)\le \frac{\sigma}{2}(s^2+1).
		\]
		Since
		\[
		3\left(\frac{\sigma}{2}\right)^2<5,
		\]
		we get
		\[
		3s^2(s+1)^2
		\le
		3\left(\frac{\sigma}{2}\right)^2(s^2+1)^2
		<
		5(s^2+1)^2.
		\]
		Thus \(\Delta<0\), so the quadratic is nonnegative for all \(t\), proving \eqref{eq:goal2}.

        Finally, the discriminant in the third case is
        $$
        \Delta = 9s^4 - 9 (s^2+1)^2 \,.
        $$
        Clearly, \(\Delta<0\), so the quadratic is nonnegative for all \(t\), proving \eqref{eq:goal3}.
	\end{proof}
	
%%%%%%%%%%%%%%%%%%%%%%%%%%%%%%%%%%%%%%
	
	\subsection*{Fourier preliminaries}
	
	The graph Laplacian $L$ is defined by
	\[
	(Lx)_j=2x_j-x_{j-1}-x_{j+1} \,.
	\]
	This is relevant in our context since
	$$
	D(x)= \avg{x(Lx)} \,.
	$$
	The graph Laplacian has eigenvalues
	\[
	\mu_k:=2\left(1-\cos\frac{2\pi k}{n}\right) \,,\quad k=0,\ldots,n-1 \,.
	\]
	with eigenfunctions $\chi^{(k)}:C_n\to\mathbb C$ given by
	\[
	\chi^{(k)}_j:=e^{2\pi i k j/n} \,, \quad k=0,\ldots,n-1 \,.
	\]
    In particular, there is a simple zero eigenvalue $\mu_0=0$ with eigenfunction $\chi^{(0)}=\one$ and the first nonzero eigenvalue is
	\[
	\mu_1=\mu_{n-1}=2\lambda_n
	\]
	and the corresponding eigenspace is
	\[
	V_1:=\operatorname{span}
	\left\{
	\cos\frac{2\pi j}{n},
	\sin\frac{2\pi j}{n}
	\right\}.
	\]
    It follows that the quadratic form
    $$
    Q(x) := \frac1{\lambda_n} D(x) - 2 \|x\|_2^2
    $$
    is nonpositive when $x$ is a constant, is zero when $x$ is in $V_1$ and is nonnegative when $x$ is orthogonal to constants and to $V_1$. The next lemma quantifies this nonnegativity.

    We use the standard notations
    $$
    \| x \|_2 = |\avg{|x|^2}|^{1/2} \,,
    \qquad
    \| x \|_\infty = \max_{i\in C_n} |x_i| \,.
    $$

	\begin{lemma}[High-frequency estimate]
		\label{lem:highfreq}
		Let \(n\ge 4\), and let \(z\perp \one,\, V_1\).
		Then
		\begin{equation}
			\label{eq:linftybound}
			Q(z) \geq \sigma_n^{-1} \, \|z\|_\infty^2 
			\qquad
			\sigma_n
			:=
			\frac34-\frac14\tan^2\frac\pi n \,,
		\end{equation}
		and
		\begin{equation}
			\label{eq:gapbound}
		Q(z)\ge \kappa_n \, \|z\|_2^2 \,,
		\qquad
		\kappa_n :=8\cos^2\frac\pi n-2 \,.
		\end{equation}
		We have
		$$
		0<\sigma_n < \frac34
		\qquad\text{and, if}\ n\ge 6 \,, \text{then}\
		\kappa_n\ge4 \,.
		$$
	\end{lemma}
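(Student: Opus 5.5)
The plan is to diagonalize everything in the Fourier basis $\chi^{(k)}$ and reduce both claims to explicit trigonometric estimates on the multipliers. Write $z=\sum_{k=2}^{n-2}\hat z_k\chi^{(k)}$; the frequencies $k=0,1,n-1$ are absent because $z\perp\one,V_1$. Since $\ip{\chi^{(k)}\overline{\chi^{(l)}}}=\delta_{kl}$, we have $\|z\|_2^2=\sum_k|\hat z_k|^2$ and $D(z)=\sum_k\mu_k|\hat z_k|^2$. With $\lambda_n=\mu_1/2$ this gives
\[
Q(z)=\sum_{k=2}^{n-2}c_k|\hat z_k|^2,\qquad c_k:=\frac{\mu_k}{\lambda_n}-2=2\Bigl(\frac{\sin^2(k\theta)}{\sin^2\theta}-1\Bigr),\quad \theta:=\frac\pi n ,
\]
where we used $\mu_k=4\sin^2(k\theta)$. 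The identity $\sin^2 A-\sin^2 B=\sin(A-B)\sin(A+B)$ then yields the product form
\[
c_k=\frac{2\sin((k-1)\theta)\sin((k+1)\theta)}{\sin^2\theta}.
\]
This is strictly positive for $2\le k\le n-2$, because then $(k\pm1)\theta\in(0,\pi)$.

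For \eqref{eq:gapbound}, it suffices to bound $\min_{2\le k\le n-2}c_k$ from below. Since $\sin^2(k\theta)$ is symmetric under $k\mapsto n-k$ and increases on $k\le n/2$, the minimum is attained at $k=2$. There $\sin^2 2\theta/\sin^2\theta=4\cos^2\theta$, so $c_2=8\cos^2\theta-2=\kappa_n$. For $n\ge6$ we have $\cos^2(\pi/n)\ge\cos^2(\pi/6)=3/4$, hence $\kappa_n\ge4$.

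For \eqref{eq:linftybound}, I use Cauchy--Schwarz pointwise:
\[
|z_j|\le\sum_k|\hat z_k|\le\Bigl(\sum_k c_k|\hat z_k|^2\Bigr)^{1/2}\Bigl(\sum_k c_k^{-1}\Bigr)^{1/2}.
\]
So it suffices to prove $\sum_{k=2}^{n-2}c_k^{-1}=\sigma_n$, and I expect this exact sum to be the main step. The key identity is
\[
\cot((k-1)\theta)-\cot((k+1)\theta)=\frac{\sin 2\theta}{\sin((k-1)\theta)\sin((k+1)\theta)},
\]
which gives $c_k^{-1}=\frac{\tan\theta}{4}\bigl(\cot((k-1)\theta)-\cot((k+1)\theta)\bigr)$. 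The sum over $k$ then telescopes to $\frac{\tan\theta}{4}\bigl(\cot\theta+\cot2\theta-\cot((n-2)\theta)-\cot((n-1)\theta)\bigr)$. Using $\cot(\pi-x)=-\cot x$, this equals $\frac{\tan\theta}{2}(\cot\theta+\cot2\theta)$. Finally, $\tan\theta\cot2\theta=(1-\tan^2\theta)/2$, so the sum is $\tfrac12+\tfrac14-\tfrac14\tan^2\theta=\sigma_n$.

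The case $n=4$ should be checked separately, since there only $k=2$ occurs and $\cot(\pi/2)=0$; the formula still holds. Lastly, $\sigma_n<3/4$ is immediate because $\tan\theta>0$. Also $\sigma_n>0$ because $\theta\le\pi/4$ gives $\tan^2\theta\le1<3$.
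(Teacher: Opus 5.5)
Your proposal is correct and follows the paper's proof essentially step by step: Fourier diagonalization of $Q$, the minimal multiplier $c_2=c_{n-2}=8\cos^2\theta-2$, Cauchy--Schwarz weighted by the multipliers $c_k$, and the telescoping cotangent sum giving $\sigma_n$. Along the way you also supply details the paper leaves implicit. These are the positivity of $c_k$ for $2\le k\le n-2$, the monotonicity argument for why the minimum sits at $k=2$, the use of $\cot(\pi-x)=-\cot x$ to collapse the telescoped boundary terms, and the check that the computation covers the edge case $n=4$.
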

	
	\begin{proof}
        To prove the claimed bounds, we shall use the discrete Fourier transform or, alternatively, the spectral resolution of the graph Laplacian. 
        
        For $x:C_n\to\mathbb R$ we define the Fourier coefficients
$$
\widehat{x}_k := \avg{\overline{\chi^{(k)}} \, x} \,,
$$	
and note that Fourier inversion identity and Parseval relation
$$
x = \sum_{k=0}^{n-1} \widehat{x}_k \, \chi^{(k)} \,,
\qquad
\| x\|_2^2 = \sum_{k=0}^{n-1} |\widehat{x}_k|^2 \,.
$$
Moreover, we have
$$
D(x) = \avg{x(Lx)} = \sum_{k=0}^{n-1} \mu_k |\widehat{x}_k|^2 \,.
$$	

        If \(z\perp \one,\, V_1\), then $\widehat{z}_k=0$ for $k=0,1,n-1$ and therefore
        $$
        D(z) = \sum_{k=2}^{n-2} \mu_k |\widehat{z_k}|^2
        $$
        and
        $$
        Q(z) = \sum_{k=2}^{n-2} \left( \frac{\mu_k}{\lambda_n} - 2 \right) |\widehat{z}_k|^2
        $$

        The smallest coefficient
		\[
		\frac{\mu_k}{\lambda_n}-2,
		\qquad k\notin\{0,1,n-1\},
		\]
		is attained at \(k=2\) and \(k=n-2\). Hence
		\[
		Q(z)\ge
		\left(\frac{\mu_2}{\lambda_n}-2\right) \sum_{k=2}^{n-2} |\widehat{z}_k|^2 = \left(\frac{\mu_2}{\lambda_n}-2\right) \|z\|_2^2 \,.
		\]
		We have
		\[
		\frac{\mu_2}{\lambda_n}-2
		=
		\frac{2(1-\cos(4\pi/n))}{1-\cos(2\pi/n)}-2
		=
		8\cos^2\frac\pi n-2.
		\]
		For \(n\ge6\), this is at least \(4\). This proves the second bound in the lemma and the bound on $\kappa_n$.

        To prove the first bound in the lemma, we use the Fourier inversion identity and bound, using the Cauchy--Schwarz inequality,
		\[
		|z_j|^2 = \left| \sum_{k=2}^{n-2} \widehat{z}_k \chi^{(k)}_j \right|^2
		\le
		\left(
		\sum_{k=2}^{n-2}
		\frac1{\frac{\mu_k}{\lambda_n}-2}
		\right)Q(z) \,.
		\]
		It remains to compute the sum. Put \(\theta=\pi/n\). Since
		\[
		\lambda_n=1-\cos(2\theta)=2\sin^2\theta,
		\]
		and
		\[
		\cos(2\theta)-\cos(2k\theta)
		=
		2\sin((k-1)\theta)\sin((k+1)\theta),
		\]
		we get
		\[
		\frac1{\frac{\mu_k}{\lambda_n}-2}
		=
		\frac{\sin^2\theta}{
			2\sin((k-1)\theta)\sin((k+1)\theta)}.
		\]
		Using
		\[
		\cot((k-1)\theta)-\cot((k+1)\theta)
		=
		\frac{\sin(2\theta)}
		{\sin((k-1)\theta)\sin((k+1)\theta)},
		\]
		we obtain
		\[
		\frac1{\frac{\mu_k}{\lambda_n}-2}
		=
		\frac{\tan\theta}{4}
		\left[
		\cot((k-1)\theta)-\cot((k+1)\theta)
		\right].
		\]
		Summing from \(k=2\) to \(k=n-2\) gives
		\[
		\begin{aligned}
            \sum_{k=2}^{n-2} \frac1{\frac{\mu_k}{\lambda_n}-2}
            & =	\frac{\tan\theta}{4}
			\sum_{k=2}^{n-2}
			\left[
			\cot((k-1)\theta)-\cot((k+1)\theta)
			\right]                                      \\
			&=
			\frac{\tan\theta}{2}
			\left(\cot\theta+\cot(2\theta)\right)        \\
			&=
			\frac34-\frac14\tan^2\theta.
		\end{aligned}
		\]
		This proves the first bound in the lemma.
	\end{proof}
	
%%%%%%%%%%%%%%%%%%%%%%%%%%%%
%%%%%%%%%%%%%%%%%%%%%%%%%%%%	
	
	\subsection*{Proof of Theorem \ref{thm:cubic}}
	
	\begin{proof}
		Let $x:C_n\to\mathbb R$ with $x_j\ge 0$ for all $j\in C_n$ and $\ip{x^2}=1$. Set
		$$
		a:= \ip{x}
		$$
		and note that $a\ge 0$ as a consequence of $x_j\geq 0$ for all $j$. We decompose
		$$
		x = a + v + z
		$$
		where
		\[
		v\in V_1,
		\qquad
		z\perp \one,
		\qquad
		z\perp V_1 \,,
		\]
		By orthogonality, it follows that
		$$
		1 = \| x\|_2^2 = a^2 + \|v\|_2^2 + \|z\|_2^2
		$$
		and
		$$
		D(x) = 2\lambda_n \|v\|_2^2 + D(z) \,.
		$$
		It is convenient to introduce the quantity
		$$
		Q:=Q(z)= \frac{1}{\lambda_n} D(z) - 2 \|z\|_2^2 \,,
		$$
		which is nonnegative by Lemma \ref{lem:highfreq}. Writing
        $$
        D(x) = \lambda_n \left( Q + 2(\|v\|_2^2 + \|z\|_2^2) \big) = \lambda_n \big( Q + 2(1-a^2) \right),
        $$
		we see that the claimed inequality in Theorem \ref{thm:cubic} can be expressed as
		\begin{align}\label{eq:goalsob}
			Q & \ge \frac23 \ip{(x-1)^2(x+2)} - 2 (1-a^2) \,.
		\end{align}
		
		Let us compute the nonlinear term. We have
		\begin{align*}
			(x-1)^2(x+2)
			& = (a-1)^2(a+2) +(a-1)^2 (v+z) \\
			& \quad + 2(a-1)(a+2)(v+z) + 2(a-1)(v+z)^2 \\
			& \quad + (a+2)(v+z)^2 +(v+z)^3
		\end{align*}
		and consequently
		\begin{align*}
			\ip{(x-1)^2(x+2)} & = (a-1)^2(a+2) + 3a \ip{(v+z)^2} + \ip{(v+z)^3} \,.
		\end{align*}
        The orthogonality relations lead to
		$$
		\ip{(v+z)^2} = \ip{v^2} + \ip{z^2} = 1-a^2
		$$
		and we find
		$$
			\ip{(x-1)^2(x+2)} = (a-1)^2(a+2) + 3a(1-a^2) + \ip{(v+z)^3} \,.
		$$
		Inserting this into \eqref{eq:goalsob}, we see that the inequality in Theorem \ref{thm:cubic} becomes
		\begin{align}\label{eq:goalsob2}
			Q & \geq \frac23 \left( - (1-a)^2(1+2a) + \ip{(v+z)^3} \right).
		\end{align}
		
		To proceed, we need to bound the term $\ip{(v+z)^3}$ from above. We now treat three different cases, depending on whether $n=4$, $n=5$ or $n\geq 6$. It will be convenient to use the abbreviations
		$$
		r:= \|v\|_2 \,,
		\qquad
		t:= \|z\|_2 \,.
		$$

		\medskip
		\noindent
		\textbf{Case \(n=4\).}
        In this case the orthogonal complement of $\one$ and $V_1$ is one-dimensional and spanned by what is sometimes known as Nyquist mode. Thus, there is a $c\in\mathbb R$ such that
		\[
		z_j=c \,(-1)^j.
		\]
		Then \(t=|c|\). Write
		\[
		v_j=p\cos\frac{\pi j}{2}
		+
		q\sin\frac{\pi j}{2}.
		\]
		Then the values of \(v\) on the four points of $C_4$ are
		\[
		p,q,-p,-q,
		\]
		and hence
		\[
		r^2= \|v\|_2^2 = \frac{p^2+q^2}{2}.
		\]
		Also
		\[
		\ip{v^3}=0,
		\qquad
		\ip{vz^2}=0,
		\qquad
		\ip{z^3}=0,
		\]
		and
		\[
		|\ip{v^2z}|
		=
		\frac{|c|}{2} \, |p^2-q^2|
		\le
		|c| \, \frac{p^2+q^2}{2}
		=
		tr^2.
		\]
		Thus
		\[
		\ip{(v+z)^3} = \ip{v^3} + 3 \ip{v^2z} + 3 \ip{vz^2} + \ip{z^3} \le 3r^2t.
		\]
        Using the elementary inequality \eqref{eq:scalar3}, we deduce
        $$
        \ip{(v+z)^3} \le (1-a)^2 (1+2a) + 3 t^2
        $$
        and therefore
		$$
		\frac23 \left( -(1-a)^2(1+ 2a) + \ip{(v+z)^3} \right) \leq 2 t^2 \,.
		$$
        For $n=4$ we have (see Lemma \ref{lem:highfreq} and its proof)
        $$
        Q = 2t^2 \,,
        $$
        which proves the desired inequality \eqref{eq:goalsob2}. This proves Theorem \ref{thm:cubic} for $n=4$.
		
		\medskip
		\noindent
		\textbf{Case \(n=5\).}
		Define $\chi:C_n\to\mathbb C$ by
		\[
		\chi_j :=e^{2\pi i j/5} \,.
		\]
        Then $\chi=\chi^{(1)}$ and $\chi^{-1}=\chi^{(4)}$ span $V_1$, and $\chi^2$ and $\chi^{-2}$ span the orthogonal complement of constants and $V_1$. Thus, there are $A,B\in\mathbb C$ such that
		\[
		v=A\chi+\overline A\chi^{-1},
		\qquad
		z=B\chi^2+\overline B\chi^{-2}.
		\]
		Then
		\[
		r^2=2|A|^2,
		\qquad
		t^2=2|B|^2.
		\]
		A direct multiplication modulo \(5\) gives
		\[
		\ip{(v+z)^3}
		=
		6\operatorname{Re}\left(A^2\overline B+AB^2\right),
		\]
		and consequently
		\[
		\ip{(v+z)^3}
		\le
		\frac{3}{\sqrt2}(r^2t+rt^2) \,.
		\]
		Using the elementary inequality \eqref{eq:scalar2}, we deduce
		$$
		\ip{(v+z)^3} \leq (1-a)^2 (1+2a) + \frac52 t^2 \,,
		$$
		and therefore
		$$
		\frac23 \left( -(1-a)^2(1+ 2a) + \ip{(v+z)^3} \right) \leq \frac53 \, t^2 \,.
		$$
		For $n=5$ we have (see Lemma \ref{lem:highfreq} and its proof)
		$$
		Q = (1+\sqrt 5) \, t^2 \,,
		$$
		so the desired inequality \eqref{eq:goalsob2} follows from
		$$
		1+\sqrt 5 \geq \frac53 \,,
		$$
		This proves Theorem \ref{thm:cubic} for $n=5$.
		
		\medskip
		\noindent
		\textbf{Case \(n\ge6\).}		
		For \(v\in V_1\), we write
		\[
		v=A\chi+\overline A\chi^{-1}
		\]
        with some $A\in\mathbb C$. Here $\chi := \chi^{(1)}$ from the proof of Lemma \ref{lem:highfreq}. A straightforward computation gives
        \[
		\ip{v^3}=0 \,,
		\qquad
		\|v\|_\infty \le \sqrt2\,r \,,
		\qquad
		\|v^2-\ip{v^2}\|_2=\frac{r^2}{\sqrt2} \,.
		\]
		Therefore,
		\[
		|\ip{v^2z}| = |\ip{(v^2 - \ip{v^2}) z}|
		\le \|v^2-\ip{v^2}\|_2 \|z\|_2 
        \le \frac{r^2}{\sqrt2} \,t
        \]
        and
        \[
		|\ip{vz^2}|
		\le \|v\|_\infty \|z\|_2^2 \le \sqrt2\,rt^2.
		\]
		Moreover, by Lemma~\ref{lem:highfreq},
		\[
		|\ip{z^3}|
		\le
		\|z\|_\infty \|z\|_2^2
		\le
		t^2\sqrt{\sigma_n} \sqrt Q \,.
		\]
		Thus,
		\begin{align*}
			\ip{(v+z)^3} = 3\ip{v^2 z} + 3\ip{vz^2} + \ip{z^3} \leq \frac 3{\sqrt2} r^2 t + 3\sqrt 2 r t^2 + \sqrt{\sigma_n} \sqrt Q \, t^2 \,.
		\end{align*}
		Using the elementary inequality \eqref{eq:scalar1}, we deduce
		\begin{align*}
			\ip{(v+z)^3} \leq (1-a)^2 (1+2a) + 4 t^2 + \sqrt{\sigma_n} \sqrt Q \, t^2
		\end{align*}
		and therefore
		$$
		\frac23 \left( -(1-a)^2(2a+1) + \ip{(v+z)^3} \right) \leq \frac83 t^2 + \frac23 \sqrt{\sigma_n} \sqrt Q t^2 \,.
		$$
		Thus, the desired inequality \eqref{eq:goalsob2} follows if we can show that
		\begin{equation}
			\label{eq:goalsob3}
					Q \geq \frac83 t^2 + \frac23 \sqrt{\sigma_n} \sqrt Q t^2 \,.
		\end{equation}
		Let us prove this. By Lemma~\ref{lem:highfreq},
		\[
		Q\ge\kappa_nt^2,
		\qquad
		\kappa_n\ge4,
		\qquad
		\sigma_n<\frac34.
		\]
		Thus,
		$$
		Q = \frac23 Q + \frac13 Q \geq \frac23\kappa_n t^2 + \frac13 \sqrt Q \sqrt\kappa_n t \,.
		$$
		Since $\kappa_n\ge 4$, we have $\frac23\kappa_n\ge \frac 83$ and, using also $t\leq 1$, $\frac13\sqrt{\kappa_n} \ge \frac23 > \frac23 \sqrt{\sigma_n} t$. This proves \eqref{eq:goalsob3}.
		
		This proves Theorem \ref{thm:cubic} for $n\ge 6$ and completes the proof.
	\end{proof}

%%%%%%%%%%%%%%%%%%%%%%%%%%%%%%%%%
%%%%%%%%%%%%%%%%%%%%%%%%%%%%%%%%%

    \section*{Acknowledgments}

R.~L.~F.~acknowledges partial support from US NSF grant DMS-1954995 and the DFG grants EXC-2111-390814868 and TRR 352-Project-ID 470903074. P.~I.~acknowledges partial support from the US NSF CAREER grant DMS-2152401, US NSF grant DMS-2554183,  a Simons Fellowship, and a Humboldt Research Fellowship for Experienced Researchers. The authors acknowledge the use of AI tools during the exploratory stage of this project. All mathematical arguments and proofs in the final manuscript were checked and written by the authors.

\end{document}